\newtheorem{thm}{Theorem}[section]
\newtheorem{lemma}[thm]{Lemma}
\newtheorem{defn}[thm]{Definition}
\newtheorem{cor}[thm]{Corollary}
\newtheorem{rk}{Remark}
\newtheorem*{thma}{Theorem A}
\newtheorem*{thmb}{Theorem B}
\newtheorem*{ack}{Acknowledgements}
\newcommand{\Z}{\mathscr{Z}}
\begin{document}

\title {Finite-Dimensionality of Z-Boundaries} 
\author{ Molly A. Moran}

\begin{abstract} 

In this paper, we refine the notion of Z-boundaries of groups introduced by Bestvina and further developed by Dranishnikov. We then show that the standard assumption of finite-dimensionality can be omitted as the result follows from the other assumptions. 
\end{abstract}
%

\maketitle

\section{Introduction}

It is easy to construct a proper CAT(0) space with infinite-dimensional boundary, but a result by Swenson \cite{Swe99} shows that such a space cannot admit a cocompact action by isometries. 
This observation mirrors an earlier theorem by Gromov \cite{Gr87} which asserts that boundaries of hyperbolic groups are finite-dimensional. 

The rich study of CAT(0) and hyperbolic group boundaries led Bestvina to formalize the concept of group boundaries for wider classes of groups \cite{Be96}. Included in his definition is a hypothesis which forces these boundaries, known as $\mathscr{Z}$-boundaries, to be finite-dimensional. Later, when Dranishnikov generalized Bestvina's work  to allow for groups with torsion \cite{Dr06}, he omitted the requirement in Bestvina's original definition that forced the boundaries to be finite-dimensional. 

%
In this paper, we prove a generalization of Swenson's theorem that applies to  a more general class of spaces. A consequence of this result is a more unified treatment of group boundaries put forth by Bestvina and Dranishnikov. We show that there is no advantage in restricting our attention to finite-dimensional spaces as in \cite{Be96}. 
In regards to \cite{Dr06}, all conclusions about the cohomological dimension of group boundaries can be extended to results about the Lebesgue covering dimension of these boundaries. 

We close with statements of our main results, which may be found in Sections 3 and 4, respectively. 
\begin{thma} If a proper metric ANR $(X,d)$ admits a metric $\Z$-structure $(\hat{X},Z)$, then $Z$ is finite-dimensional. 
\end{thma}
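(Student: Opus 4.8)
The plan is to follow the strategy of Swenson's original argument, with the cocompact group action replaced by the nullity condition built into the metric $\Z$-structure. Recall that for a compact metrizable space, $\dim Z \le n$ precisely when every finite open cover of $Z$ admits a finite open refinement of order at most $n+1$; equivalently, it suffices to produce a single integer $N$ so that $Z$ has open covers of arbitrarily small mesh (measured in a fixed metric on $\hat{X}$ compatible with the topology) and order at most $N$. Thus the entire theorem reduces to extracting one uniform bound on the multiplicity of arbitrarily fine covers of the boundary. The two structural inputs that must carry the argument are: the $Z$-set condition, which together with the ANR hypothesis furnishes homotopies pushing $\hat{X}$ off $Z$ into $X$, and the nullity condition, which guarantees that sets of bounded $d$-diameter become arbitrarily small in $\hat{X}$ as they escape every compact subset of $X$.

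The core construction is to resolve $Z$ by families of subsets of $X$. Given $\varepsilon > 0$, I would use the nullity condition to choose a compact core $C \subseteq X$ outside of which every set of $d$-diameter at most a fixed $r$ has diameter less than $\varepsilon$ in $\hat{X}$. Covering the complementary ``end'' $X \setminus C$ by open sets of $d$-diameter at most $r$ and adjoining to each its limit points in $Z$ then yields an open cover of a neighborhood of $Z$ in $\hat{X}$, and hence of $Z$ itself, whose mesh is controlled by $\varepsilon$. Letting $\varepsilon \to 0$ produces open covers of $Z$ of vanishing mesh, so by the criterion above the theorem now rests entirely on bounding the orders of these covers by a single integer independent of $\varepsilon$.

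The main obstacle, and the precise place where Swenson's cocompactness is genuinely missing, is this uniform bound on multiplicity. Under a cocompact action every region of $X$ looks like every other, so a bound obtained near a basepoint propagates to infinity for free; here the same uniformity must instead be squeezed out of the nullity condition together with properness and the local structure of the ANR. I would argue by contradiction: if no uniform bound existed, one could find points of $Z$ at which the minimal order of fine covers grows without bound, and, using compactness of $\hat{X}$, extract a limit point $z_\infty \in Z$ about which nested bounded-diameter regions of $X$ accumulate with unbounded overlap. The crux is to show that such a configuration is incompatible with $\hat{X}$ being an ANR in which $Z$ is a $Z$-set: near $z_\infty$ the local contractibility of $\hat{X}$ and the homotopies pushing off $Z$ should force the traces of the covering elements to overlap only in a controlled, finite pattern. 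Establishing this local finiteness rigorously is the heart of the argument; granting it, the resulting integer $N$ bounds the order of all sufficiently fine covers of $Z$, whence $\dim Z \le N-1 < \infty$.
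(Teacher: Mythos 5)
Your proposal starts from a misreading of the hypotheses, and that misreading is what creates the gap you then cannot close. A metric $\Z$-structure is \emph{not} ``Swenson's setting with the cocompact group action replaced by the nullity condition'': condition (3) of the definition explicitly provides a cocompact action by isometries of some group $G$ on $(X,d)$, and condition (4), the nullity condition, is formulated in terms of $G$-translates of a fixed bounded set, so it is not even meaningful without that action. (What is genuinely absent, compared to Swenson's theorem, is properness of the \emph{action} and the CAT(0) geometry, not cocompactness.) This misreading also infects your mesh construction: the nullity condition does not assert that \emph{every} subset of $d$-diameter at most $r$ lying far from a compactum is $\epsilon$-small in $\hat{X}$; it asserts this only for $G$-translates of a single bounded set $U$. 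So even the ``easy'' half of your argument, producing covers of $Z$ of arbitrarily small mesh, is not justified as written (and, separately, an open subset of $X$ together with its limit points in $Z$ is generally not open in $\hat{X}$, so your sets do not form an open cover of a neighborhood of $Z$).

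The fatal gap, however, is the step you explicitly defer (``granting it''): the uniform bound on the order. The mechanism you propose --- that local contractibility of $\hat{X}$ near a limit point $z_\infty$, together with the homotopies pushing off $Z$, forces covering elements to overlap in a controlled finite pattern --- cannot work, because those hypotheses alone never imply finite-dimensionality. Concretely, the Hilbert cube $Q$ sits as a $\Z$-set in the compact AR $Q\times[0,1]$ (push off $Q\times\{0\}$ by $H_t(q,s)=(q,\max\{s,t\})$), which is as locally contractible as one could wish, yet $Q$ is infinite-dimensional; so ``ANR $+$ $\Z$-set'' can never bound covering multiplicity. Any proof must use conditions (3) and (4) in an essential way, and that is exactly what the paper does: its Lemma 3.1 shows that a cocompact isometric action on a \emph{proper} metric space yields a uniformly bounded open cover $\mathscr{U}$ of $X$ of finite order $k$ --- take $2r$-balls centered at a maximal $r$-separated subset of an orbit; properness bounds the number of $r$-separated points in a closed $4r$-ball, and the isometric action transports that bound everywhere. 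The elements of $\mathscr{U}$ are then $G$-translates of one ball, so the nullity condition applies to them: those missing a suitable compactum $K$ have $\hat{d}$-diameter less than $\epsilon/2$. The $\Z$-set homotopy pushes $Z$ into the region covered by such small elements while moving points less than $\epsilon/4$, and pulling $\mathscr{U}$ back under $H_{t_\epsilon}|_Z$ gives covers of $Z$ of mesh less than $\epsilon$ and order at most $k$, with $k$ independent of $\epsilon$. In short, the uniform multiplicity bound comes from the group action on $X$, not from the local topology of $\hat{X}$ near $Z$, and without it your argument does not reach the conclusion.
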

\begin{thmb} If a torsion-free group $G$ admits an AR $\Z$-structure, then $G$ admits a $\Z$-structure, as defined in \cite{Be96}. 
\end{thmb}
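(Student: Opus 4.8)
The plan is to show that, once $G$ is torsion-free, the only respect in which an AR $\Z$-structure can fail to be a $\Z$-structure in the sense of \cite{Be96} is the finite-dimensionality of the compactification $\hat{X}$, and that this is exactly what Theorem A supplies. First I would record that the action is automatically free: a properly discontinuous action has finite point-stabilizers, and since $G$ is torsion-free these stabilizers are trivial. Thus $G$ acts freely, properly discontinuously, and cocompactly on $X=\hat{X}\setminus Z$, and the nullity condition is part of the $\Z$-structure data; so every clause of the definition in \cite{Be96} except finite-dimensionality is already in force.

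Next I would put myself in a position to invoke Theorem A. The set $X=\hat{X}\setminus Z$ is open in the compact AR $\hat{X}$, hence is itself an ANR, and the geometric action furnishes a $G$-invariant proper metric $d$ for which $(\hat{X},Z)$ is a metric $\Z$-structure on $(X,d)$. Theorem A then yields $\dim Z<\infty$.

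The core of the argument is to upgrade $\dim Z<\infty$ to $\dim\hat{X}<\infty$, and here I would use that $X$ is a finite-dimensional ANR together with elementary covering-dimension theory. Since $\hat{X}$ is compact metric and $Z$ is closed, $X$ is $\sigma$-compact: setting $K_n=\{x\in\hat{X}:d(x,Z)\ge 1/n\}$ gives compact sets $K_n\subseteq X$ with $X=\bigcup_n K_n$ and $\dim K_n\le\dim X$. Thus $\hat{X}=Z\cup\bigcup_n K_n$ is a countable union of closed subsets, each of dimension at most $\max(\dim X,\dim Z)$, and the countable closed sum theorem gives
\[
\dim\hat{X}=\max(\dim X,\dim Z)<\infty .
\]
Consequently $\hat{X}$ is a finite-dimensional compact AR, i.e.\ a Euclidean retract, $Z$ is a $\Z$-set in it, and the free cocompact action with the nullity condition is unchanged; hence $(\hat{X},Z)$ meets every requirement of \cite{Be96}.

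I expect the only genuine difficulty to be hidden inside Theorem A; granting it, the passage from the boundary to the whole compactification is the routine dimension-theoretic bookkeeping above. The single point that warrants care is the hypothesis $\dim X<\infty$: this is built into the metric $\Z$-structure, the space carrying the geometric action being taken as a finite-dimensional proper ANR, so that Dranishnikov's relaxation in \cite{Dr06} affects only the boundary. Were $X$ permitted to be infinite-dimensional, one would instead have to argue that $\dim Z<\infty$ forces $\mathrm{cd}(G)<\infty$ via the isomorphism $\check{H}^{\,k}(Z)\cong H^{k+1}(G;\ZZ G)$ and then replace $X$ by a finite-dimensional model—an appreciably harder task—so isolating finite-dimensionality of the boundary as the sole obstruction is precisely what keeps the argument clean.
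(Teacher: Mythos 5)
Your proposal has a genuine gap, and it sits exactly at the point you flag as ``the single point that warrants care.'' You assert that $\dim X<\infty$ is built into the definitions---that the metric $\Z$-structure (and Dranishnikov's AR $\Z$-structure) takes the space carrying the action to be a finite-dimensional ANR, so that the relaxation in \cite{Dr06} ``affects only the boundary.'' This is false: in both definitions the only hypothesis is that $\hat{X}$ is a compact AR; nothing bounds the dimension of $X=\hat{X}-Z$, and the whole point of Dranishnikov's relaxation is that $\hat{X}$ itself, not merely $Z$, may be infinite-dimensional. Concretely, let $Q$ be the Hilbert cube, let $X=\mathbb{R}\times Q$ with $\mathbb{Z}$ acting by translation in the first factor (a free, proper, cocompact action), and let $\hat{X}$ be the compactification obtained from $[-\infty,+\infty]\times Q$ by collapsing each end $\{\pm\infty\}\times Q$ to a point, i.e.\ the suspension of $Q$. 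Then $\hat{X}$ is a compact AR, the two suspension points form a $\Z$-set $Z$, and the nullity condition holds because the slices $\{t\}\times Q$ shrink to points as $t\to\pm\infty$; so this is a legitimate AR $\Z$-structure with $\dim Z=0$, yet $\dim X=\dim\hat{X}=\infty$. Your countable-closed-sum computation $\dim\hat{X}=\max(\dim X,\dim Z)$ is fine as far as it goes, but here it outputs $\infty$: no dimension-theoretic bookkeeping can turn $(\hat{X},Z)$ itself into a Bestvina $\Z$-structure, because it simply is not one.

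This also shows you have slightly misread what must be proved: Theorem B does not claim that the given pair $(\hat{X},Z)$ is a $\Z$-structure in the sense of \cite{Be96}, only that $G$ admits \emph{some} $\Z$-structure, and in general the underlying space must be replaced. That is what the paper does, and it is precisely the route you dismiss in your last paragraph as ``appreciably harder''---though it requires no cohomological-dimension argument. Since $G$ is torsion-free the action is free (your first observation, which the paper needs as well), so $p:X\to X/G$ is a covering projection and $X/G$ is a compact ANR; by West's theorem \cite{We77} it is homotopy equivalent to a finite complex $Y$, and lifting the homotopies gives a $G$-equivariant homotopy equivalence between $X$ and the ER $\tilde{Y}$. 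The generalized Boundary Swapping theorem (Theorem 4.1)---whose proof is where the finite-dimensionality of $Z$ supplied by Theorem A actually enters, via local contractibility of finite-dimensional ANRs---then transfers the boundary: $(\tilde{Y}\cup Z,Z)$ is the desired $\Z$-structure. So Theorem A is indeed the engine, but it feeds boundary swapping; it cannot be used to bound $\dim\hat{X}$ for the original compactification.
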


\begin{ack} The author is deeply grateful to Craig Guilbault for his help in the presentation of the material, as well as his useful comments and suggestions. Also, many thanks to Chris Hruska for his input in helping simplify the proof of Lemma 3.1 and Mladen Bestvina for his helpful suggestions on the exposition. 
\end{ack}

\section{Preliminaries}

We begin with a few preliminary definitions and then present some generalizations of Bestvina's original definition of a $\Z$-structure. 

We suppose that our spaces are locally compact, separable, and metrizable. First, recall that 
a separable metric space $X$ is an \emph{absolute retract} (or AR) if, whenever $X$ is embedded as a closed subset of another separable metric space $Y$, its image is a retract of $Y$ and $X$ is an \emph{absolute neighborhood retract} (or ANR) if some neighborhood of $X$ in $Y$ retracts onto $X$. If $X$ is a finite-dimensional AR, we call $X$ a \emph{Euclidean retract} (or ER). For more on these concepts, see for example \cite{Hu65}. 

 A closed subset, $A$, of an ANR, $X$, is a \emph{$\mathscr{Z}$-set} if there exists a homotopy $H:X\times [0,1]\to X$ such that $H_0=id_X$ and $H_t(X)\subset X-A$ for every $t>0$.
A \emph{$\mathscr{Z}$-compactification} of a space $Y$ is a compactification $\hat{Y}$ such that $\hat{Y}-Y$ is a $\mathscr{Z}$-set in $\hat{Y}$. 

\begin{defn} \cite{Be96} A \emph{$\mathscr{Z}$-structure on a group $G$} is a pair of spaces $(\hat{X},Z)$ satisfying the following four conditions:
 	\begin{enumerate}
		\item $\hat{X}$ is a compact ER,
		\item $\hat{X}$ is a $\Z$-compactification of $X=\hat{X}-Z$,
		\item $G$ acts properly, cocompactly, and freely on $X$, and 
		\item $\hat{X}$ satisfies a \emph{nullity condition} with respect to the action of $G$ on $X$. That is, for every compactum $C$ of $X$ and any open cover $\mathscr{U}$ of $\hat{X}$, all 		but finitely many $G$ translates of $C$ lie in an element of $\mathscr{U}$.
	\end{enumerate}
\end{defn}



We say that $Z$ is a \emph{boundary} (or $\Z$\emph{-boundary}) of $G$ if there is a $\Z$-structure $(\hat{X},Z)$ on $G$. This boundary is not unique; there can be multiple $\Z$-structures for a given group $G$. However, any two boundaries of $G$ will have the same shape \cite{Be96}.

In \cite{Dr06}, Dranishnikov generalized Bestvina's definition by omitting the requirement that $G$ act freely on $X$ and loosening the restriction that $\hat{X}$ be an ER to being an AR.  There is one immediate drawback in allowing infinite-dimensionality of $\hat{X}$: $Z$ could potentially be infinite-dimensional. We show in the next section that this is not the case; the covering dimension of $\Z$-boundaries (in the sense of Dranishnikov) is finite.  



Since our main result was motivated by attempting to generalize Swenson's theorem \cite[Theorem 12]{Swe99}, we need one final generalized definition of a $\Z$-structure that does not require properness of the action.


%

\begin{defn} Let $(X,d)$ be a metric space. A \emph{metric $\Z$-structure} on $X$, denoted $M\Z$-structure, is a pair of spaces $(\hat{X},Z)$ satisfying the following conditions:
	\begin{enumerate}
		\item $\hat{X}$ is a compact AR,
		\item $\hat{X}$ is a $Z$-compactification of $X=\hat{X}-Z$, 
		\item $X$ admits a cocompact action by isometries by some group $G$, and
		\item $\hat{X}$ satisfies a \emph{nullity condition} with respect to the action of $G$ on $X$:  for every $\epsilon>0$ and for each bounded subset $U$ of $X$ (bounded in the $d$ metric), there exists a compact subset $C$ of
		 $X$ such that any $G$-translate of $U$ that does not intersect $C$ has diameter less than $\epsilon$ (in the metric on the
		  compactification). 
	\end{enumerate}
\end{defn}

\section{Finite-Dimensionality Results}

Recall that a compact metric space has \emph{Lebesgue covering dimension at most $n$}, denoted dim$X\leq n$ if for every $\epsilon >0$, there exists an open cover $\mathscr{U}$ with mesh$(\mathscr{U})<\epsilon$ and order$(\mathscr{U})\leq n$. Here the \emph{order} of an open covering $\mathscr{U}$ being at most $n$ means that each $x\in X$ is in at most $n+1$ elements of $\mathscr{U}$ and the \emph{mesh} of a cover $\mathscr{U}$ is defined as mesh$(\mathscr{U})=\sup\{\text{diam}(U)|U\in\mathscr{U}\}$.

The goal of this section is to prove the following: 

\begin{thma} Let $(X,d)$ be a metric space which admits a $M\Z$-structure $(\hat{X},Z)$. Then \emph{dim}$Z<\infty$.
\end{thma}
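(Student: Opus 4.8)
The plan is to verify the covering-dimension criterion recalled just above the statement directly: I will produce, for each $\epsilon>0$, a finite open cover of $Z$ of mesh less than $\epsilon$ whose order is bounded by a single integer $N$ independent of $\epsilon$. Since $Z=\hat X\setminus X$ is a closed subset of the compact metric space $\hat X$, it is itself compact metric, so such covers immediately give $\dim Z\le N<\infty$. The whole argument thus splits into two tasks: manufacturing covers of arbitrarily small mesh out of the $M\Z$-structure data, and controlling their order uniformly.

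First I would fix the cocompactness data. Choose a compact $K\subseteq X$ with $G\cdot K=X$, and then a bounded open set $W\supseteq K$; the translates $\{gW:g\in G\}$ then form an open cover of $X$ by isometric copies of a single bounded set. Applying the nullity condition to $W$ and a given $\epsilon>0$ yields a compact $C\subseteq X$ such that every translate $gW$ disjoint from $C$ has $\hat X$-diameter less than $\epsilon$. Working with $W$ rather than with $K$ is essential here: a compact translate is closed in $\hat X$ and so cannot meet $Z$, and it is only the noncompact closures of the ``escaping'' translates that accumulate on $Z$. Since $C$ is compact and disjoint from $Z$, it lies a positive $\hat X$-distance $\rho$ from $Z$, so near $Z$ only translates that eventually avoid $C$ --- hence are $\epsilon$-small --- are relevant. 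Taking the traces on $Z$ of suitable open $\hat X$-neighborhoods of these escaping translates and passing to a finite subcover gives an open cover $\mathscr U_\epsilon$ of $Z$ of mesh less than $\epsilon$. The verification that such sets genuinely cover $Z$ --- a precise description of how translates cluster at a boundary point --- is where I expect to need a preliminary lemma, which I would isolate as Lemma 3.1.

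The hard part is the uniform bound on the order of $\mathscr U_\epsilon$. Two features make this delicate. First, the action is assumed only cocompact, not proper, so a compact subset of $X$ may meet infinitely many translates and one cannot bound the order by naively counting translates through a point. Second, and more fundamentally, $G$ acts by isometries of $(X,d)$ but need not act at all on the compactification $\hat X$, so the only bridge between the rigid $d$-structure and the $\hat X$-metric controlling mesh and order on $Z$ is the nullity condition itself. My plan is to prove, as the key lemma, that some integer $N$ bounds, uniformly in $\epsilon$, the number of mutually $\epsilon$-small cover elements that can meet near a single boundary point. I would argue by contradiction in the spirit of Swenson: from an unbounded family of overlapping small translates, use the compactness of $\hat X$ and a pigeonhole over the fixed cocompact model $W$ to extract a configuration on which the nullity condition fails. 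Choosing the correct bounded set to feed into the nullity condition, and organizing the $d$-geometry of the offending translates so that nullity can detect it, is the step I expect to be the genuine obstacle; by contrast, the reductions of the first two paragraphs --- building small-mesh covers and upgrading closed covers to open ones via normality of the compact metric space $Z$ --- are essentially bookkeeping.
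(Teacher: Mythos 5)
Your proposal has a genuine gap, and it sits exactly where you predicted it would: the uniform order bound. Worse, the strategy you sketch for closing it cannot work, because the cover you build is the wrong one. Your cover elements are traces on $Z$ of $\hat{X}$-neighborhoods of the escaping translates $gW$, one for each $\epsilon$-small translate. For such a family the key lemma you want is simply false: a single boundary point $z$ typically lies within any fixed $\hat{d}$-distance $\delta$ of infinitely many distinct small translates (picture $X=\mathbb{R}^2$ with $G=\mathbb{Z}^2$ and $\hat{X}$ the radial compactification by a circle: all translates of the unit square lying far out in a sector pointing at $z$ are both $\epsilon$-small and $\hat{d}$-close to $z$), so infinitely many of your neighborhoods contain $z$, and this happens even for proper actions where nullity holds. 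The nullity condition cannot detect or forbid this, since it constrains only the diameters of translates, never their multiplicity; no contradiction can be extracted from it. A second instance of the same disease occurs already inside $X$: because the action is only cocompact-by-isometries, infinitely many translates $gW$ may essentially coincide (e.g.\ $G=\mathbb{Q}$ acting on $\mathbb{R}$ by translations), so the family $\{gW\}$ itself has infinite order. Bounding multiplicity forces you to select a subfamily; the paper does this in Lemma 3.1 by taking balls $B(x,2r)$ centered on a maximal $r$-separated subset of an orbit and bounding the order by a packing argument in $\overline{B(x_0,4r)}$ (this is also where properness of $(X,d)$ enters, which your outline uses implicitly but never states).

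The paper then resolves the boundary-order problem with a tool your proposal never touches: the $\Z$-set homotopy $H$ with $H_0=id_{\hat{X}}$ and $H_t(\hat{X})\cap Z=\emptyset$ for $t>0$. Instead of extending a cover of $X$ outward to $Z$ by taking $\hat{X}$-neighborhoods (which, as above, inflates the order uncontrollably at the boundary), the paper pushes $Z$ inward: for suitably small $t_\epsilon$, the set $H_{t_\epsilon}(Z)$ lies in $X$ and is covered by the $\epsilon$-small elements of the Lemma 3.1 cover, and one pulls this cover back under the map $H_{t_\epsilon}|_Z$. Preimages under a map automatically preserve the order bound $k$, and the mesh stays below $\epsilon$ because uniform continuity lets one arrange that $H_{t_\epsilon}$ moves points of $Z$ by less than $\epsilon/4$. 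This is precisely where the hypothesis that $\hat{X}$ is a $\Z$-compactification gets used; your argument uses it only for density of $X$ in $\hat{X}$, which is itself a warning sign. In summary: your first-paragraph mesh construction is salvageable (granted properness), but the order bound genuinely requires both the $r$-separated packing selection and the pull-back-along-the-homotopy device, and neither can be replaced by a contradiction argument against the nullity condition.
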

Our proof relies on the following lemma. 



\begin{lemma} Suppose $G$ acts cocompactly by isometries on a proper metric space $X$. Then there exists a uniformly bounded open cover $\mathscr{U}$ of $X$ with finite order. \end{lemma}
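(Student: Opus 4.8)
The plan is to build the cover from a maximal separated set and to reduce the finite-order condition to a single uniform packing estimate, which is where both hypotheses will be used. First I would invoke cocompactness to fix a compact set $K\subseteq X$ with $G\cdot K = X$, and then prove the key claim: for a fixed $\epsilon>0$, there is an integer $P=P(\epsilon)$ so that no ball of radius $\epsilon$ in $X$ contains more than $P$ pairwise $\epsilon$-separated points. To see this, given any $x\in X$ choose $g\in G$ with $g^{-1}x\in K$; since $g$ is an isometry we have $B(x,\epsilon)=g\,B(g^{-1}x,\epsilon)$, so every $\epsilon$-ball in $X$ is isometric to an $\epsilon$-ball centered at a point of $K$, and the latter lies inside the set $C_\epsilon=\{y\in X : d(y,K)\le\epsilon\}$. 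Because $X$ is proper and $K$ is bounded, $C_\epsilon$ is closed and bounded, hence compact, so it admits at most finitely many pairwise $\epsilon$-separated points; call the maximum such number $P$. Transporting an $\epsilon$-separated subset of $B(x,\epsilon)$ back into $C_\epsilon$ by the isometry $g^{-1}$ preserves separation, so the claim follows.

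With the packing bound in hand, I would choose (via Zorn's Lemma) a maximal $\epsilon$-separated subset $S\subseteq X$ and set $\mathscr{U}=\{B(s,\epsilon) : s\in S\}$. Maximality guarantees that every point of $X$ lies within $\epsilon$ of some $s\in S$, so $\mathscr{U}$ is an open cover, and each member has diameter at most $2\epsilon$, so $\mathscr{U}$ is uniformly bounded. For the order, note that $x\in B(s,\epsilon)$ if and only if $s\in B(x,\epsilon)$; hence the members of $\mathscr{U}$ containing a fixed point $x$ are indexed precisely by $S\cap B(x,\epsilon)$, which is an $\epsilon$-separated subset of a radius-$\epsilon$ ball and therefore has cardinality at most $P$. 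Thus each point lies in at most $P$ elements of $\mathscr{U}$, so the order is at most $P-1<\infty$, and $\mathscr{U}$ is the desired cover.

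The main obstacle is establishing the uniform packing bound, i.e.\ that the number of $\epsilon$-separated points in a ball can be controlled \emph{independently of the location} of the ball. This is exactly the point at which both assumptions are essential: cocompactness lets one slide every ball onto the fixed compact piece $K$ via an isometry, while properness is what converts the compactness of $K$ into the finite quantitative bound $P$, since it forces $C_\epsilon$ to be compact. Without properness, $C_\epsilon$ need not be compact and the separated-point count could fail to be finite, so I would be careful to isolate this step and flag precisely where each hypothesis enters.
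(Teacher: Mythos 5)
Your proof is correct and follows essentially the same strategy as the paper's: cover $X$ by balls around a maximal separated set, then bound the order by translating any crowded configuration via an isometry into a fixed compact set (where properness yields a finite packing bound). The only differences are cosmetic --- you take the separated net in all of $X$ rather than in a single orbit, and you isolate the packing estimate as a standalone claim, which makes the role of each hypothesis slightly more explicit.
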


\begin{proof} By cocompactness, we may choose $r>0$ so that $GB(x_0, r)=X$ for some $x_0\in X$. Let  $A$ be a maximal $r$-separated subset of the orbit of $x_0$ and $\mathscr{U}=\{B(x, 2r)|x\in A\}$. Clearly, $\mathscr{U}$ is a  uniformly bounded open cover.
We claim order$\mathscr{U}\leq n$, where $n$ is the maximal number of $r$-separated points in $\overline{B(x_0,4r)}$. Otherwise, there are points $x_1, x_2, ...x_{n+1}\in A$ with $\cap_{i=1}^{n+1}B(x_i, 2r)\neq\emptyset$. Thus, $r\leq d(x_i,x_j)<4r$ for $i\neq j$ and $i,j\in\{1,2,...,n+1\}$. Choosing an isometry $g\in G$ with $gx_1=x_0$, the points $gx_1,gx_2,...,gx_{n+1}$ are $r$-separated and contained in $B(x_0, 4r)$, a contradiction. 
\end{proof}


\begin{proof} [Proof of Theorem A] Let $H:\hat{X}\times[0,1]\to\hat{X}$ be a $Z$-set homotopy with $H_0=id_{\hat{X}}$ and $H_t(\hat{X})\cap Z=\emptyset$ for every $t>0$. Let $\epsilon>0$ and fix a metric $\hat{d}$ on $\hat{X}$.

Let $\mathscr{U}$ be a cover of $X$ as in the proof of Lemma 3.1 and $k<\infty$ be the order of $\mathscr{U}$. Using the nullity condition, we may choose a compactum $K\subseteq X$ such that diam$_{\hat{d}} V<\epsilon/2$ for every $V\in\mathscr{U}$ with $V\cap K=\emptyset$.

Choose $\delta_1\in(0,1]$ small enough such that $H_{\delta}(Z)$ is covered by open sets $V\in\mathscr{U}$ with diam$_{\hat{d}} V<\epsilon/2$ for all $\delta\leq\delta_1$.  Moreover, $H:\hat{X}\times[0,1]\to\hat{X}$ is uniformly continuous, so we may choose $\delta_2\in (0,1]$ so that for every $\delta\leq \delta_2$ and for each $z\in Z$, $\hat{d}(z, H_{\delta}(z))<\epsilon/4$. Set $t_{\epsilon}=$min$\{\delta_1,\delta_2\}$.

%


 Consider $\mathscr{V}_{\epsilon}=\{V\in\mathscr{U}|V\cap H_{t_{\epsilon}}(Z)\neq\emptyset \text{ and } V\cap K=\emptyset\}$. Notice $\mathscr{V}_{\epsilon}$ is an open cover of $H_{t_{\epsilon}}(Z)$ with mesh bounded by $\epsilon/2$ and order bounded by $k$. 
 
 Define $\mathscr{W}_{\epsilon}=\{H_{t_{\epsilon}}|_Z^{-1}(V)|V\in \mathscr{V}_{\epsilon}\}$. We show this is the desired cover. Clearly, $\mathscr{W}_{\epsilon}$ forms an open cover of $Z$ since $\mathscr{V}_{\epsilon}$ forms an open cover of $H_{t_{\epsilon}}(Z)$.  Moreover, mesh$_{\hat{d}}\mathscr{W}_{\epsilon}<\epsilon$ by the triangle inequality. 
Lastly, we know the order of the cover $\mathscr{V}_{\epsilon}$ of $H_{t_{\epsilon}}(Z)$ is at most $k$. Since $\mathscr{W}_{\epsilon}$ is the set of pre-images of $\mathscr{V}_{\epsilon}$ under the continuous map $H_{t_{\epsilon}}|_Z$, then $\mathscr{W}_{\epsilon}$ also has order at most $k$.

\end{proof}

\begin{rk} Theorem 12 in \cite{Swe99} now follows directly from Theorem A. 
\end{rk}
\begin{cor} If $G$ admits a $\Z$-structure $(\hat{X},Z)$ in the sense of \cite{Dr06}, then $\emph{dim}Z<\infty$. 
\end{cor}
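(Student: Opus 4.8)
The plan is to deduce the corollary from Theorem A by showing that every $\Z$-structure in the sense of \cite{Dr06} can be upgraded to an $M\Z$-structure, after which $\text{dim}\,Z<\infty$ follows immediately. Comparing the two definitions, conditions (1) and (2) are literally the same: $\hat{X}$ is a compact AR and a $Z$-compactification of $X=\hat{X}-Z$. The freeness of the action, which Dranishnikov drops and which the $M\Z$-structure never demands, plays no role. Thus the entire task reduces to reconciling the two formulations of the group action (condition (3)) and the two formulations of the nullity condition (condition (4)).

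First I would address condition (3). In Dranishnikov's setting $G$ acts properly and cocompactly on $X$ by homeomorphisms, whereas an $M\Z$-structure requires a cocompact action by isometries of a proper metric. Since $\hat{X}$ is a compact metrizable AR, the open subset $X$ is locally compact, separable, and metrizable, and the $G$-action on it is proper and cocompact. Under these hypotheses there is a $G$-invariant metric $d$ on $X$, compatible with the topology, for which $d$ is proper; with respect to $d$ the group $G$ then acts cocompactly by isometries, establishing condition (3). This construction of the invariant proper metric is the heart of the argument and is where the geometry required by Theorem A is supplied.

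It remains to verify the metric nullity condition (4). Here properness of $d$ is exactly what bridges the two formulations: every bounded subset of $(X,d)$ has compact closure. Given $\epsilon>0$ and a bounded set $U\subseteq X$, I would set $C=\overline{U}$, a compactum, and choose a finite open cover $\mathscr{U}$ of the compact space $\hat{X}$ by sets of $\hat{d}$-diameter less than $\epsilon$. By Dranishnikov's nullity condition, all but finitely many translates $gC$ lie in some element of $\mathscr{U}$ and hence have $\hat{d}$-diameter less than $\epsilon$; let $g_1C,\dots,g_mC$ be the finitely many exceptional translates and put $C'=\bigcup_{i=1}^m g_iC$, which is compact. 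If a translate $gU$ fails to meet $C'$, then $gC=\overline{gU}$ cannot equal any $g_iC$ (else $gU\subseteq g_iC\subseteq C'$), so $gC$, and therefore $gU$, has $\hat{d}$-diameter less than $\epsilon$. This is precisely the metric nullity condition, completing the verification that $(\hat{X},Z)$ is an $M\Z$-structure.

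The main obstacle is the metrization step: producing a $G$-invariant proper metric inducing the given topology on $X$ from a proper cocompact action by homeomorphisms. Once that metric is in hand, conditions (1), (2), and (4) are essentially formal, and Theorem A yields $\text{dim}\,Z<\infty$. I would therefore spend the bulk of the proof either citing a precise invariant-metrization theorem for proper cocompact discrete group actions on locally compact separable spaces or constructing such a metric directly, for instance by averaging a compatible metric over a fundamental domain and using properness to guarantee the local finiteness needed for the construction to converge.
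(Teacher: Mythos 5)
Your proposal is correct and takes essentially the same route the paper intends: the corollary is stated as a direct consequence of Theorem A, and your reduction---equipping $X$ with a $G$-invariant proper compatible metric so that the Dranishnikov structure becomes an $M\Z$-structure, then checking that the compactum-and-open-cover nullity condition implies the metric nullity condition via the closure $\overline{U}$ and a finite cover of $\hat{X}$ of mesh less than $\epsilon$---is exactly what makes that deduction go through. The invariant-metrization step you flag is indeed a known theorem (Abels--Manoussos--Noskov: a proper action of a locally compact, $\sigma$-compact group on a locally compact, $\sigma$-compact metrizable space admits a compatible $G$-invariant proper metric, applicable here since $X$ is open in the compact metrizable $\hat{X}$ and $G$ is necessarily countable and discrete), so the citation route is sound; only your fallback sketch of ``averaging over a fundamental domain'' should be dropped, as such an average over an infinite group does not obviously converge or satisfy the triangle inequality.
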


\section{Consequences of Finite-Dimensionality of $\Z$-Boundaries}

We may now discuss how knowing finite covering dimension of the various $\Z$-boundaries can serve to unify the theories of group boundaries presented by Bestvina and Dranishnikov. First, any result about the cohomological dimension of the boundary in \cite{Dr06} may now be replaced with a statement concerning the Lebesgue covering dimension because in a space with finite Lebesgue covering dimension, covering dimension and cohomological dimension coincide (see for example \cite[Theorem 3.2(b)]{Wa81}). 

Secondly, there is no advantage in restricting ourselves to working with an ER. 

\begin{thmb} Suppose a group $G$ admits an AR $\Z$-structure. Then $G$ admits a $\Z$-structure.\end{thmb}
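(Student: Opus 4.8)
The plan is to manufacture an ER $\Z$-structure from the given AR $\Z$-structure by keeping the boundary while replacing the (possibly infinite-dimensional) contractible space on which $G$ acts by a finite-dimensional one. First I dispose of the conditions that do not involve dimension. Bestvina's definition requires a free action, which forces $G$ to be torsion-free, so I assume $G$ torsion-free as in the statement in the introduction. Because the action of $G$ on $X$ is proper, every point stabilizer is a finite subgroup of $G$, hence trivial; thus the action is automatically free, giving condition (3). The $\Z$-compactification hypothesis and the nullity condition are already in force, so the entire difficulty is that a compact AR need not be finite-dimensional. By Theorem A we know $\dim Z<\infty$, so the only possible source of infinite dimension is $X$ itself. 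This genuinely occurs (for instance $G=\ZZ$ acting on $Q\times\RR$, compactified to the suspension of the Hilbert cube, which is again $Q$, with a two-point boundary), so one cannot hope merely to observe that $\hat X$ is finite-dimensional; I must build a new, lower-dimensional model for the action.

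To this end, note that since $Z$ is a $\Z$-set the inclusion $X\hookrightarrow\hat X$ is a homotopy equivalence, so $X$ is a contractible ANR. As $G$ acts freely, properly, and cocompactly, the quotient $X/G$ is a compact, aspherical ANR, that is, a $K(G,1)$. By West's theorem every compact ANR has the homotopy type of a finite CW complex, so $X/G$ is homotopy equivalent to a finite complex $L$; since both spaces are aspherical, $L$ is a finite $K(G,1)$, whence $G$ has finite geometric dimension and the universal cover $X'=\tilde L$ is a finite-dimensional contractible complex carrying a free, proper, cocompact $G$-action. Lifting the homotopy equivalence $L\to X/G$ to universal covers yields a $G$-equivariant homotopy equivalence $f\colon X'\to X$, and since it covers a map between compact spaces it is proper; thus $f$ is a proper $G$-homotopy equivalence.

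It then remains to transport the compactification across $f$, attaching $Z$ to $X'$ so that $\hat X'=X'\cup Z$ is a $\Z$-compactification for which the extension $\bar f\colon\hat X'\to\hat X$ is continuous and restricts to the identity on $Z$. Granting this, $\hat X'$ is a $\Z$-compactification of the contractible ANR $X'$, hence itself a contractible compact ANR, i.e.\ a compact AR; and by the dimension addition theorem $\dim\hat X'\le\dim X'+\dim Z+1<\infty$, so $\hat X'$ is in fact a compact ER. The nullity condition for $(\hat X',Z)$ transfers from that for $(\hat X,Z)$ because the boundary identification is $G$-equivariant and $G$ acts cocompactly on $X'$. Combined with the free action already established, this exhibits $(\hat X',Z)$ as a $\Z$-structure on $G$ in the sense of \cite{Be96}.

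The main obstacle is precisely this transfer: a proper map need not extend continuously to the boundary, so one cannot simply pull back the topology of $\hat X$ along an arbitrary $f$. The work is to arrange $f$ (for example through the mapping cylinder of a cellular approximation, together with a controlled homotopy that pushes the ends of $X'$ uniformly close to the corresponding points of $Z$) so that the induced map of compactifications is continuous and $Z$ remains a $\Z$-set with nullity intact. This boundary-extension, or \emph{$\Z$-compactification transfer}, argument is the technical heart of the theorem; the dimension bookkeeping and the verification of Bestvina's remaining axioms are then routine.
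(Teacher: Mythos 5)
Your skeleton is exactly the paper's: observe that properness plus torsion-freeness makes the action free, apply West's theorem to replace the compact ANR $X/G$ by a finite complex $L$, lift to universal covers to get a $G$-equivariant homotopy equivalence between the ER $\tilde L$ and $X$, and then move the boundary $Z$ from $X$ onto $\tilde L$. The difference is that the paper closes the last step with a concrete tool --- a generalized boundary swapping theorem (Theorem 4.1), proved by running Bestvina's argument for \cite[Lemma 1.4]{Be96} verbatim --- whereas you leave that step as an acknowledged conjecture (``Granting this\dots''), offering only a gesture toward mapping cylinders and controlled homotopies. Since you yourself call this transfer ``the technical heart of the theorem,'' the proposal as written has a genuine gap precisely there.

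The missing idea is \emph{why} a swapping argument is available once the source space is merely an AR. The delicate point is not continuity of $\bar f$ or the nullity condition; it is verifying that the new compactification $\hat X'=\tilde L\cup Z$ is an ANR at points of $Z$. Note that your inference ``granting [that $\hat X'$ is a $\Z$-compactification]\dots hence itself a contractible compact ANR'' is circular: calling $Z$ a $\Z$-set in $\hat X'$ already presupposes that $\hat X'$ is an ANR, and establishing that is the hard part of any such construction. Bestvina verifies it via the criterion that a finite-dimensional, locally contractible compact metric space is an ANR --- a criterion that fails in infinite dimensions. This is exactly where Theorem A earns its keep: $\dim Z<\infty$ together with $\dim\tilde L<\infty$ puts $\hat X'$ in the finite-dimensional regime where local contractibility suffices, so Bestvina's proof goes through unchanged even though the original $X$ may be infinite-dimensional. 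You invoke Theorem A only for dimension bookkeeping \emph{after} granting the construction; in fact it is the ingredient that makes the construction provable. Stating and proving (or citing) the swapping lemma in this generalized form is what your argument needs in order to be complete.
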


The proof of Theorem B relies on a more general version of Bestvina's boundary swapping theorem. Given that $G$ admits a $\Z$-structure, the original version of boundary swapping \cite{Be96} provides a method to take the boundary from the $\Z$-structure and place it on another finite-dimensional space admitting an action by $G$ to obtain a new $\Z$-structure on $G$. 
In the presence of finite-dimensionality of the boundary, the local contractibility condition for finite-dimensional ANRs is satisfied (\cite[Page 168]{Hu65}), and thus the proof of \cite[Lemma 1.4]{Be96} applies to prove the following version of boundary swapping. 

\begin{thm}[Boundary Swapping]
Let $G$ be a group acting properly, cocompactly, and freely on an ER $X_1$ and an AR $X_2$. Assume that $X_1$ and $X_2$ are $G$-homotopy equivalent and $\hat{X_2}=X_2\cup Z$ is an AR $\Z$-structure on $G$. Then $(\hat{X_1},Z)$ is a $\mathscr{Z}$-structure on $G$.  
\end{thm}

%



\begin{proof}[Proof of Theorem B] Let $(\hat{X},Z)$ be an AR $\Z$-structure for $G$. The map $p:X\to X/G$ is a covering projection, so $X/G$ is a compact ANR. Using a result from West \cite[Corollary 5.3]{We77}), $X/G$ is homotopy equivalent to a finite complex $Y$. Lifting the homotopies to the universal cover $\tilde{Y}$, an ER, we obtain a $G$-equivariant homotopy equivalence between $X$ and $\tilde{Y}$. Applying Theorem 4.1, $\tilde{Y}\cup Z$ is a $\Z$-structure for $G$. \end{proof}

\bibliography{Biblio}{}
\bibliographystyle{amsalpha}

\end{document}